\providecommand{\U}[1]{\protect\rule{.1in}{.1in}}
\newtheorem{theorem}{Theorem}
\newtheorem{corollary}[theorem]{Corollary}
\newtheorem{lemma}[theorem]{Lemma}
\newtheorem{proposition}[theorem]{Proposition}
\newenvironment{proof}[1][Proof]{\noindent\textbf{#1.} }{\ \rule{0.5em}{0.5em}}
\begin{document}

\title{ Gauss map and the topology of constant mean curvature hypersurfaces of
$\mathbb{S}^{7}$ and $\mathbb{CP}^{3}$}
\author{Fidelis Bittencourt
\and Pedro Fusieger
\and Eduardo R. Longa
\and Jaime Ripoll}
\date{}
\maketitle

\begin{abstract}
We define a Gauss map $\gamma:M\rightarrow\mathbb{S}^{6}$ of an oriented
hypersurface $M$ of the unit sphere $\mathbb{S}^{7}$ and prove that $\gamma$
is harmonic if and only if $M$ has CMC. Results on the geometry and topology
of CMC hypersurfaces of $\mathbb{S}^{7}$, under hypothesis on the image of
$\gamma$, are then obtained. By a Hopf symmetrization process we define a
Gauss map for hypersurfaces of $\mathbb{CP}^{3}$ and obtain similar results
for CMC hypersurfaces of this space.

\end{abstract}

\section{Introduction}

\qquad The image of the Gauss map of a minimal surface in the Euclidean space
is a classical topic of study on Differential Geometry. A well known result
says that if the Gauss image of a complete minimal surface of $\mathbb{R}^{3}$
lies in a hemisphere of the sphere then the surface is a plane (see
\cite{BdoC}). This result was extended to constant mean curvature
(CMC)\ surfaces by D. Hoffman, R. Osserman and R. Schoen, who proved that if
the Gauss image of a complete CMC surface of $\mathbb{R}^{3}$ is contained in
a closed hemisphere of the sphere then the surface is a plane or a right
circular cylinder (\cite{HOS}). An important ingredient used in \cite{HOS},
due to Ruh-Vilms, asserts that a surface has CMC if and only if its Gauss map
is harmonic (the statement of Ruh-Vilms' theorem is more general, see
\cite{RV}). Several works extending and generalizing the Euclidean Gauss map
to higher dimensions and to different ambient spaces have appeared during the
last decades. However, to the best of authors' knowledge, extensions or
generalizations of Ruh-Vilms' theorem were only investigated on the papers
\cite{BR}, \cite{EFR}, \cite{FM}, \cite{JR}, \cite{RR}.

In this paper we use the multiplication $\cdot$ of the unit sphere
$\mathbb{S}^{7}$ centered at the origin of $\mathbb{R}^{8}$ induced by a
normed division algebra structure of $\mathbb{R}^{8}$ $($the so called
octonionic multiplication, see $\cite{Ba}),$ to define a translation map
$\Gamma:T\mathbb{S}^{7}\rightarrow T_{1}\mathbb{S}^{7}$ by $\Gamma
_{x}(v):=x^{-1}\cdot v$, $(x,v)\in T\mathbb{S}^{7}$, where $T\mathbb{S}^{7}$
is the tangent bundle of $\mathbb{S}^{7}$ and $1$ is the unit of $\cdot$. The
map $\Gamma$ provides a simple and natural definition of a Gauss map
$\gamma:M\rightarrow\mathbb{S}^{6}\subset T_{1}\mathbb{S}^{6}$ of an
orientable hypersurface $M$ of $\mathbb{S}^{7}$. Indeed, if $\eta$ is a unit
normal vector for $M$, then we set $\gamma(x):=\Gamma_{x}(\eta(x))$.

We obtain an extension to $\gamma$ of a well known formula for the Laplacian
of the Euclidean Gauss map, and prove an extension of Ruh-Vilms' theorem to a
hypersurface $M$ of $\mathbb{S}^{7}$: $\gamma:M\rightarrow\mathbb{S}^{6}$ is
harmonic if and only if $M$ is a CMC hypersurface (Theorem \ref{for}). We then
use formula (\ref{f}) for the Laplacian of $\gamma$ to obtain results on the
topology of a CMC hypersurface of $\mathbb{S}^{7}$ under certain conditions on
the image of $\gamma$ (Theorem \ref{mao}). These constructions are only
possible in the $7$ dimensional sphere (besides $\mathbb{S}^{3}$, studied in
\cite{EFR}) since, by a classical result of Hurwitz, the only normed division
algebra structures of $\mathbb{R}^{n}$ are $\mathbb{R}$, $\mathbb{C}$,
$\mathbb{R}^{4}$ $($the quaternions) and $\mathbb{R}^{8}$ $\mathbb{(}$the
octonions). By a process of symmetrization we introduce, from the translation
of $\mathbb{S}^{7}$, a translation on $\mathbb{CP}^{3}$ minus a totally
geodesic $\mathbb{CP}^{2},$ obtaining similar results on $\mathbb{CP}%
^{3}\backslash\mathbb{CP}^{2}.$

\section{The octonionic Gauss map and constant mean curvature hypersurfaces of
$\mathbb{S}^{7}$}

\qquad As mentioned in the introduction, we use here the multplication $\cdot$
on $\mathbb{S}^{7},$ called octonionic multiplication, induced by the unique
normed division algebra structure of $\mathbb{R}^{8}$ (we note that with this
operation $\mathbb{S}^{7}$ is not a Lie group since $\cdot$ is not
associative. See \cite{Ba}). We define a translation $\Gamma:T\mathbb{S}%
^{7}\rightarrow T_{1}\mathbb{S}^{7}$ by $\Gamma(x,v)=x^{-1}\cdot v,$ $(x,v)\in
T\mathbb{S}^{7},$ where $1\in\mathbb{S}^{7}$ is the neutral element of
$\cdot.$

Any vector $v\in T_{x_{0}}\mathbb{S}^{7}$ at any given point $x_{0}%
\in\mathbb{S}^{7m}$ determines a vector field $V_{v}$ on $\mathbb{S}^{7}$,
which we call here a \emph{translational vector field, }given by
\[
V_{v}(x)=\Gamma_{x^{-1}}(\Gamma_{x_{0}}(v))
\]
or
\[
V_{v}(x)=x\cdot(x_{0}^{-1}\cdot v)=:R_{\left(  x_{0}^{-1}\cdot v\right)
}(x),\quad x\in\mathbb{S}^{7}.
\]

A \emph{Hopf} vector field in $\mathbb{S}^{7}$ is any vector field which is
$\operatorname*{Ad}$-conjugated to the vector field of the form%

\[
X_{0}=\left(
\begin{array}
[c]{cccc}%
A & 0 & 0 & 0\\
0 & A & 0 & 0\\
0 & 0 & A & 0\\
0 & 0 & 0 & A
\end{array}
\right)  ,\text{ }A=\left(
\begin{array}
[c]{cc}%
0 & 1\\
-1 & 0
\end{array}
\right)
\]
that is, a vector field of the form $X=TX_{0}T^{-1}$ with $T\in O(8)$.

\begin{lemma}
\label{kil} A translational vector field of $\mathbb{S}^{7}$ is a multiple of
a Hopf vector field of $\mathbb{S}^{7}$. In particular, translational vector
fields are Killing fields without singularities.
\end{lemma}

\begin{proof}
It is easy to see that a translational vector field $X$ is of the form
$X(x)=x\cdot v$, for some $v\in T_{1}\mathbb{S}^{m}$. Assuming that $X$ is
nonzero, then $v\neq0$ and we have $X=\left\Vert v\right\Vert R_{v/\left\Vert
v\right\Vert }$. Since $\left\Vert v/\left\Vert v\right\Vert \right\Vert =1$
and $v/\left\Vert v\right\Vert \in T_{1}\mathbb{S}^{m}$ it follows that
$R_{v/\left\Vert v\right\Vert }$ is skew-symmetric and orthogonal (see Section
2.3 of \cite{Ba}). The eigenvalues of $X$ are therefore pure imaginary and
must be all equal to $i$. The lemma then follows from elementary Linear Algebra.
\end{proof}

Let $M$ be an oriented immersed hypersurface of $\mathbb{S}^{7}$ and let
$\eta:M\rightarrow T\mathbb{S}^{7}$ be a unit normal vector field along $M$.
We define the Gauss map (octonionic Gauss map)
\[
\gamma:M\rightarrow\mathbb{S}^{6}\subset T_{1}\mathbb{S}^{7}%
\]
of $M$, where $\mathbb{S}^{6}$ is the unit sphere centered at the origin of
$T_{1}\mathbb{S}^{7}$, by
\[
\gamma(x):=\Gamma_{x}(\eta(x))=x^{-1}\cdot\eta(x),\quad x\in M.
\]

The Laplacian $\Delta\gamma$ of $\gamma$ is defined by setting
\[
\Delta\gamma:=\sum_{j=1}^{6}\Delta_{M}\left(  \left\langle \gamma
,v_{j}\right\rangle \right)  v_{j}%
\]
where $\Delta_{M}$ is the Laplacian in $M$ and $\{v_{1},\dots,v_{6}\}$ is an
orthonormal basis of $T_{1}\mathbb{S}^{6}$. It is easy to see that
$\Delta\gamma$ does not depend on the choice of basis. One may prove that
$\gamma$ is harmonic if and only if $\left(  \Delta\gamma\right)  ^{\top}=0$,
where
\[
\left(  \Delta\gamma\right)  ^{\top}=\Delta\gamma-\left\langle \Delta
\gamma,\gamma\right\rangle \gamma
\]
is the orthogonal projection of $\Delta\gamma$ onto the tangent spaces of
$\mathbb{S}^{6}$ (see \cite{EL}).

\begin{theorem}
\label{for} Let $M$ be an oriented immersed hypersurface of $\mathbb{S}^{7}$,
and let $\gamma:M\rightarrow\mathbb{S}^{6}\subset T_{1}\mathbb{S}^{7}$ be the
Gauss map of $M$ determined by a unit normal vector field $\eta$ along $M$.
Then
\begin{equation}
\Delta\gamma(x)=-6\Gamma_{x}\left(  \operatorname{grad}H(x)\right)  -\left(
6+\left\Vert A(x)\right\Vert ^{2}\right)  \gamma(x),\quad x\in M, \label{f}%
\end{equation}
where $H:M\rightarrow\mathbb{R}$ is the mean curvature function of $M$ with
respect to $\eta$, $\operatorname{grad}H$ is its gradient on $M$, $A$ is the
second fundamental form of $M$ and $\left\Vert A\right\Vert $ is its norm.
\end{theorem}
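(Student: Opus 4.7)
The plan is to reduce (\ref{f}) to a scalar Laplacian formula for the support function of a Killing field along $M$, and then to establish that scalar formula by a classical Bochner-type calculation.

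I fix an orthonormal basis $\{v_1,\dots,v_7\}$ of $T_1\mathbb{S}^7=\operatorname{Im}\mathbb{R}^8$ and consider the associated translational vector fields $V_j(x):=x\cdot v_j$ on $\mathbb{S}^7$. By Proposition \ref{ortske} the left translation $L_x$ is orthogonal, so $\{V_j(x)\}_{j=1}^{7}$ is an orthonormal basis of $T_x\mathbb{S}^7$ at every $x$; by Proposition \ref{kil} each $V_j$ is Killing. The alternative law of the octonions gives $\bar{x}\cdot(x\cdot v_j)=(\bar{x}\cdot x)\cdot v_j=v_j$, hence $\Gamma_x(V_j(x))=v_j$. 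Applying the linear map $\Gamma_x=L_{x^{-1}}$ to the pointwise expansions $\eta(x)=\sum_j\langle\eta(x),V_j(x)\rangle V_j(x)$ and $\operatorname{grad}H(x)=\sum_j\langle\operatorname{grad}H(x),V_j(x)\rangle V_j(x)$ gives
\[
\gamma(x)=\sum_{j=1}^{7}\langle\eta(x),V_j(x)\rangle\,v_j,\qquad \Gamma_x(\operatorname{grad}H(x))=\sum_{j=1}^{7}\langle\operatorname{grad}H(x),V_j(x)\rangle\,v_j.
\]
Consequently (\ref{f}) is equivalent to the family of scalar identities
\[
\Delta\langle\eta,V_j\rangle=-6\langle\operatorname{grad}H,V_j\rangle-(6+\|A\|^2)\langle\eta,V_j\rangle,\quad j=1,\dots,7.
\]

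The heart of the argument is the Bochner-type formula, valid for any Killing field $V$ on $\mathbb{S}^7$: with $f:=\langle V,\eta\rangle$,
\[
\Delta f+(6+\|A\|^2)f=-6\langle V,\operatorname{grad}H\rangle.
\]
To prove this I would work in a local orthonormal frame $\{e_i\}_{i=1}^{6}$ on $M$ that is geodesic at the chosen point and expand $\Delta f=\sum_i e_ie_i(f)$ using: (i) the Weingarten formula $\nabla^{\mathbb{S}^7}_{e_i}\eta=-A(e_i)$; (ii) the Killing antisymmetry $\langle\nabla^{\mathbb{S}^7}_XV,Y\rangle+\langle\nabla^{\mathbb{S}^7}_YV,X\rangle=0$, which makes the cross-term $\sum_i\langle\nabla^{\mathbb{S}^7}_{e_i}V,A(e_i)\rangle$ vanish as the trace of a skew form against the symmetric operator $A$; (iii) the second-derivative identity $\nabla^{\mathbb{S}^7}_X\nabla^{\mathbb{S}^7}_YV=R^{\mathbb{S}^7}(X,V)Y+\nabla^{\mathbb{S}^7}_{\nabla^{\mathbb{S}^7}_XY}V$, which for a Killing field follows from the skew-symmetry of $\nabla V$ combined with the Ricci identity. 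Combining (iii) with constant sectional curvature $1$ of $\mathbb{S}^7$ yields $\sum_i\langle\nabla^{\mathbb{S}^7}_{e_i}\nabla^{\mathbb{S}^7}_{e_i}V,\eta\rangle=-6f$. Finally, expanding $\nabla^{\mathbb{S}^7}_{e_i}(A(e_i))$ via the Gauss formula produces a normal piece $\|A(e_i)\|^2\eta$ that sums to $\|A\|^2 f$, while the tangential piece, combined with the traced Codazzi identity $\sum_i(\nabla^M_{e_i}A)(e_i)=6\operatorname{grad}H$, yields the $-6\langle V,\operatorname{grad}H\rangle$ contribution.

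The main technical obstacle is step (iii): the Killing second-derivative identity must be derived with the correct sign, as an error there would flip $-6f$ into $+6f$ and produce $(\|A\|^2-6)f$ instead of $(\|A\|^2+6)f$ on the left-hand side. One must also verify that the normal component $f\eta$ in the decomposition $V=V^\top+f\eta$ contributes nothing spurious; this again follows from the Killing antisymmetry, which forces $\langle\nabla^{\mathbb{S}^7}_\eta V,\eta\rangle=0$. Once the scalar identity is established, reassembling the seven components produces (\ref{f}).
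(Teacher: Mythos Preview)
Your approach is essentially the paper's: both decompose $\gamma$ in an orthonormal basis $\{v_j\}$ of $T_1\mathbb{S}^7$, use that the translational fields $V_{v_j}$ are Killing (Proposition~\ref{kil}), and reduce (\ref{f}) componentwise to the scalar identity $\Delta\langle\eta,V\rangle=-6\langle\operatorname{grad}H,V\rangle-(6+\|A\|^2)\langle\eta,V\rangle$ for a Killing field $V$ on $\mathbb{S}^7$. The only difference is that the paper quotes this scalar identity directly from Proposition~1 of \cite{FR}, whereas you outline a self-contained Bochner--Codazzi derivation of it; your sketch of that computation is correct.
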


\begin{proof}
Let $\{v_{1},\dots,v_{m}\}$ be an orthonormal basis of $T_{1}\mathbb{S}^{m}$.
Since, by Lemma \ref{kil}, the vector fields $V_{v_{j}}$ are Killing fields,
we may use Proposition 1 of \cite{FR} to obtain, at any $x\in M$,
\begin{align*}
\Delta\gamma(x) &  =\sum_{j=1}^{m}\Delta\left(  \left\langle \gamma
,v_{j}\right\rangle \right)  (x)v_{j}=\sum_{j=1}^{m}\Delta\left(  \left\langle
\eta,V_{v_{j}}\right\rangle \right)  (x)v_{j}\\
&  =-6\sum_{j=1}^{m}\left\langle V_{v_{j}},\operatorname{grad}H\right\rangle
(x)v_{j}-\left(  6+\left\Vert A\right\Vert ^{2}\right)  \sum_{j=1}%
^{m}\left\langle V_{v_{j}},\eta\right\rangle (x)v_{j}\\
&  =-6\sum_{j=1}^{m}\left\langle v_{j},\Gamma_{x}\left(  \operatorname{grad}%
H(x)\right)  \right\rangle v_{j}-\left(  6+\left\Vert A\right\Vert
^{2}\right)  \sum_{j=1}^{m}\left\langle v_{j},\gamma(x)\right\rangle v_{j}\\
&  =-6\Gamma_{x}\left(  \operatorname{grad}H(x)\right)  -\left(  6+\left\Vert
A\right\Vert ^{2}\right)  \gamma(x),
\end{align*}
proving the theorem.
\end{proof}

\begin{corollary}
\label{harm} Let $M$ be an oriented immersed hypersurface of $\mathbb{S}^{7}$
and let $\gamma:M\rightarrow\mathbb{S}^{6}$ be the Gauss map of $M$. Then $M$
has constant mean curvature if and only if $\gamma$ is harmonic. Equivalently,
$M$ has constant mean curvature if and only, for any $v\in T_{1}\mathbb{S}%
^{7},$ the function $x\mapsto\left\langle \gamma(x),v\right\rangle $ is
harmonic in $M.$
\end{corollary}

A Gauss map $\gamma$ of an orientable hypersurface $M$ of $\mathbb{S}^{n},$
$n\geq3,$ that have been often studied in the literature associates, to each
$x\in M,$ the vector $\eta(x)\in\mathbb{S}^{n},$ where $\eta$ is a unit normal
vector field along $M.$ E. De Giorgi (\cite{DG}) and J. Simons (\cite{JS})
proved that if $M$ is compact, has constant mean curvature and $\gamma(M)\ $is
contained in an \emph{open} hemisphere of $\mathbb{S}^{n}$ then $M$ must be a
totally geodesic hypersphere of $\mathbb{S}^{n}.$ Using the octonionic Gauss
map we obtain:

\begin{theorem}
Let $M$ be an orientable compact CMC hypersurface of $\mathbb{S}^{7}$ and let
$\gamma:M\rightarrow\mathbb{S}^{6}$ be its (octonionic) Gauss map. Then
$\gamma(M)$ is not contained in any open hemisphere of $\mathbb{S}^{6}.$
\end{theorem}

\begin{proof}
Assume that $\gamma(M)$ is contained in a hemisphere of $\mathbb{S}^{6}$
having $v\in\mathbb{S}^{6}$ as pole. This means that $\left\langle
\gamma(x),v\right\rangle >0$ for all $x\in M.$ Since $M$ is compact, we may
find $7$ linearly independent vector $v_{1},...,v_{7}\in T_{1}\mathbb{S}^{7}$
such that $\left\langle \gamma(x),v_{i}\right\rangle >0,$ for all $x\in M$ and
$1\leq i\leq7.$ The functions $\left\langle \gamma(x),v_{i}\right\rangle $
being harmonic and not changing sign must be constant. Hence $\gamma$ must be
a constant map and then $\Delta\gamma=0.$ From (\ref{f}) $\gamma$ must be
zero, contradiction! This proves the corollary.
\end{proof}

The case that the image of the Gauss map is contained in a \emph{closed
}hemisphere of the sphere is more complicated. Recall that in the
$3-$dimensional Euclidean space, D. Hoffman, R. Osserman and R. Schoen proved
that if the image of a complete CMC surface is contained in a closed
hemisphere of $\mathbb{S}^{2}$ then the surface is a plane or a right circular
cylinder (Theorem 1 of \cite{HOS}). With similar suitable hypothesis on the
\emph{Grassmanian }image of the Gauss map of a complete surface in
$\mathbb{R}^{4}$, with non zero parallel mean curvature vector, they also
prove in \cite{HOS} that a surface is plane, a cylinder or a flat torus. A
similar result holds when the ambient space is $\mathbb{S}^{3}$ (Theorems 2
and 3 of \cite{HOS}).

We obtain here a theorem for a CMC compact hypersurface $M$ of $\mathbb{S}%
^{7}$ assuming that the image of the octonionic Gauss map of $M$ is contained
in a closed hemisphere of $\mathbb{S}^{6}$. We are not able to prove an
isometric rigidity theorem but only some topological rigidity (isometric
rigidity seems to be a difficult problem in higher dimensions. To our
knowledge no such a result was obtained for CMC hypersurfaces of
$\mathbb{R}^{n}$ or $\mathbb{S}^{n}$ for $n\geq5$).

Observe that in the results of \cite{HOS}, mentioned above, the Euler
characteristic of a CMC surface having the Gauss map contained in a closed
hemisphere is zero. We obtain the same conclusion on the Euler characteristic
of CMC hypersurfaces having the Gauss image contained in a closed hemisphere
of $\mathbb{S}^{6}$ (Corollary \ref{eu} below). In fact, we obtain some
stronger topological theorem related to the "size" of the image of the Gauss
map. To state precisely our results we need to introduce some facts.

Given a totally geodesic $5$-dimensional sphere $T$ of $\mathbb{S}^{6}$, let
$P_{T}$ be the hyperplane through the origin of $\mathbb{R}^{7}$ such that
$T=P_{T}\cap\mathbb{S}^{6}$. We shall say that $j$ totally geodesic
$5$-dimensional spheres $T_{1},\dots,T_{j}$ of $\mathbb{S}^{6}$, $1\leq
j\leq7$, are linearly independent, if the unit normal vectors orthogonal to
each the hyperplanes $P_{T_{1}},\dots,P_{T_{j}}$ are linearly independent in
$\mathbb{R}^{7}$. Note that $T_{1}$ divides $\mathbb{S}^{6}$ into two
connected components. The closure of these connected components are called
\emph{closed hemispheres} of $\mathbb{S}^{6};$ $T_{1}\cup T_{2}$ divides
$\mathbb{S}^{6}$ into $2^{2}$ connected components which closure are called
\emph{closed} \emph{quadrants}; $T_{1}\cup T_{2}\cup T_{3}$ into $2^{3}$
connected components determining \emph{closed} \emph{octants}. In general,
$T_{1}\cup\cdots\cup T_{j}$ divides $\mathbb{S}^{6}$ into $2^{j}$ connected
components which closures are called \emph{closed }$2^{j}$-\emph{orthants }of
$\mathbb{S}^{6}$ (see \cite{R}).

By a $k$-vector field $(X_{1},\dots,X_{k})$ on an $l$-dimensional manifold
$N$, $k\leq l$, we mean an ordered set of $k$ vector fields $X_{1},...,X_{k}$
of $N$ which are linearly independent at each point of $N.$ If a $k-$vector
field $(X_{1},...,X_{k})$ is defined for all but a finite number of points we
say that it is a $k-$vector field with finite singularities.

For reader's sake, we recall now the definition of the index of a $k$-vector
field $X=(X_{1},...,X_{k})$ with finite singularities, as done in \cite{Th}.
Let $p\in N$ be a singularity of $X$. Consider a simplicial triangulation of
$N$ such that any singularity of $X$ is contained in the interior of a
$l$-simplex and that $p$ belongs to the interior of a simplex $\sigma$. The
tangent bundle of $N$ restricted to $\sigma$ is isomorphic to the product
bundle $\sigma\times\mathbb{R}^{l}$, and we assume $N$ is an oriented manifold
and that this isomorphism preserves orientation. We may also assume that
$\sigma$ is a ball in $\mathbb{R}^{l}$ and that $(X_{1},\dots,X_{k})$ is a
$k$-vector field of $\mathbb{R}^{l}$. Recalling that the Stiefel manifold
$V_{l,k}$ of $\mathbb{R}^{l}$ is defined as the space of $k\times l$ matrices
with linearly independent rows, we have that $(X_{1},...,X_{k})(x)\in V_{l,k}$
for any $x\in\sigma$. Since $\partial\sigma$ is a topological $(l-1)$-sphere,
the homotopy class of the map $\partial\sigma\to V_{l,k}$, given by
$x\mapsto(X_{1},\dots,X_{k})(x)$ is an element of the $(l-1)$-homotopy group
$\pi_{l-1}\left(  V_{l,k}\right)  $ of $V_{l,k}$. This homotopy class is, by
definition, the \emph{index } $I_{(X_{1},\dots,X_{k})}(p)$ of $(X_{1}%
,\dots,X_{k})$ at $p$. We then define the index $I_{(X_{1},\dots,X_{k})}$ of
$(X_{1},\dots,X_{k})$ in $N$ by
\[
I_{(X_{1},\dots,X_{k})}=\sum_{\substack{p\in N \\(X_{1},\dots,X_{k}%
)(p)=0}}I_{(X_{1},\dots,X_{k})}(p).
\]

The following result is well known in Differential Topology (see \S 34.2 of
\cite{St}).

\begin{theorem}
\label{ind} Let $N$ be an $l$-dimensional manifold, $l \geq2$, and let $(
X_{1}, \dots, X_{k} )$ a $k$-vector field with finite singularities on $N$, $1
\leq k \leq l$. Then $I_{( X_{1}, \dots, X_{k} )} = 0$ if and only if there is
a $k$-field with no singularities on $N$ which coincides with $( X_{1}, \dots,
X_{k} )$ on the $(l - 2)$-skeleton of $N$.
\end{theorem}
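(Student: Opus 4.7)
The plan is to treat this as a classical primary obstruction problem for sections of the associated Stiefel bundle. Let $E = V_k(TN) \to N$ be the bundle whose fiber at $x$ is the Stiefel manifold $V_{l,k}(T_x N)$ of $k$-frames in $T_x N$; a nonsingular $k$-field on an open set $U$ is exactly a section of $E|_U$, so $(X_1,\dots,X_k)$ corresponds to a section $s$ defined on $N\setminus\Sigma$, where $\Sigma$ is the finite singular set. Choose a smooth triangulation of $N$ in which each $p\in\Sigma$ lies in the open interior of a distinct top-dimensional simplex, exactly as in the definition of the index recalled in the excerpt; then $s$ is already defined on the entire $(l-1)$-skeleton $N^{(l-1)}$.

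Next I would build the primary obstruction cocycle. For each oriented $l$-simplex $\sigma$, trivializing $TN|_\sigma$ via the chosen orientation turns $s|_{\partial\sigma}$ into a map $\partial\sigma\simeq S^{l-1}\to V_{l,k}$ whose homotopy class in $\pi_{l-1}(V_{l,k})$ is, by construction, the index $I_{(X_1,\dots,X_k)}(p)$ of the singularity $p\in\mathrm{int}(\sigma)$, or zero if $\sigma$ contains no singularity. The assignment $\sigma\mapsto [s|_{\partial\sigma}]$ defines a cellular cocycle $c(s)\in C^l\bigl(N;\pi_{l-1}(V_{l,k})\bigr)$; a standard verification shows that $c(s)$ is a cocycle and that its cohomology class $[c(s)]\in H^l\bigl(N;\pi_{l-1}(V_{l,k})\bigr)$ depends only on $s|_{N^{(l-2)}}$ and is the unique obstruction to extending $s|_{N^{(l-2)}}$ to a nonvanishing global section of $E$. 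The total evaluation of $c(s)$ — on the fundamental class if $N$ is closed oriented, cell by cell otherwise — is by definition $I_{(X_1,\dots,X_k)}$.

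Both directions of the theorem now follow from one cohomological principle. If $I_{(X_1,\dots,X_k)}=0$, then $[c(s)]=0$, so there is an $(l-1)$-cochain $d\in C^{l-1}\bigl(N;\pi_{l-1}(V_{l,k})\bigr)$ with $\delta d=c(s)$. Using $d$ as a \emph{difference cochain}, one modifies $s$ on the interiors of the $(l-1)$-cells (leaving $s$ unchanged on $N^{(l-2)}$) in such a way that the new obstruction cocycle becomes $c(s)-\delta d=0$; the modified section then extends over every $l$-cell, giving a nonsingular $k$-field agreeing with $(X_1,\dots,X_k)$ on $N^{(l-2)}$. Conversely, any nonsingular $k$-field $Y$ on $N$ that coincides with $s$ on $N^{(l-2)}$ forces $[c(s)]=0$, hence $I_{(X_1,\dots,X_k)}=0$, because the primary obstruction class depends only on the $(l-2)$-skeleton data.

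The main technical obstacle is the \emph{difference cochain} step: one must show that modifying a section over a single $(l-1)$-cell $\tau$, relative to $\partial\tau$, by a prescribed element of $\pi_{l-1}(V_{l,k})$, changes the boundary classes of the adjacent $l$-cells in exactly the way the coboundary formula predicts. This is the technical core of primary obstruction theory, requiring a careful understanding of the action of $\pi_{l-1}(V_{l,k})$ on relative homotopy classes of extensions, and is precisely what is developed systematically in Steenrod's \S\S 32--34 cited in the statement.
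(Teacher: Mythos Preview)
The paper does not give its own proof of this theorem: it is stated as a known fact and the reader is referred to \S 34.2 of Steenrod \cite{St}. Your outline is precisely the primary-obstruction argument developed there, so in that sense there is nothing to compare---you have reconstructed the cited proof rather than offered an alternative.

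One small point worth tightening in your sketch: the step ``$I_{(X_1,\dots,X_k)}=0$ implies $[c(s)]=0$'' is where the hypothesis that $N$ is oriented (assumed in the paper's setup of the index) and, implicitly, closed really does work. For $N$ closed and oriented, evaluation on the fundamental class gives an isomorphism $H^{l}\bigl(N;\pi_{l-1}(V_{l,k})\bigr)\cong\pi_{l-1}(V_{l,k})$ sending $[c(s)]$ to the sum of local indices, so the equivalence is immediate; your phrase ``cell by cell otherwise'' would need more care in the open or bounded case (though there $H^{l}$ typically vanishes and both sides of the equivalence are automatically zero). Since every application in the paper is to a compact oriented $M$, this does not affect anything downstream.
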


We note that the existence of $k$-fields defined on the whole manifold or
$k$-fields with finite singularities and with index zero has strong influence
on the topology of the manifold (see \cite{Th}).

\begin{theorem}
\label{mao} Let $M$ be a compact and oriented immersed hypersurface of
constant mean curvature of $\mathbb{S}^{7}$ and let $\gamma: M \to
\mathbb{S}^{6}$ be the Gauss map of $M$. The following alternatives are equivalent:

\begin{itemize}
\item[(i)] The image $\gamma(M)$ is contained in a closed $2^{k}$-orthant of
$\mathbb{S}^{6}$, that is, the closure of a connected component of
$\mathbb{S}^{6}\setminus\left(  T_{1}\cup\cdots\cup T_{k}\right)  $, for some
linearly independent totally geodesic spheres $T_{1},\dots,T_{k}$ of
$\mathbb{S}^{6}$, $1\leq k\leq7.$

\item[(ii)]
\[
\gamma(M) \subset\bigcap_{j=1}^{k} T_{j}%
\]

\item[(iii)] Let $\{v_{1},\dots,v_{k}\}$ be a basis of
\[
\left(  \bigcap_{j=1}^{k}T_{j}\right)  ^{\perp}%
\]
and let $\mathcal{H}$ be the Lie algebra generated by the Killing fields
$V_{v_{1}},\dots,V_{v_{k}}$. Then $\mathcal{H}$ is a Lie subalgebra of the Lie
algebra of the isometry group of $M$.
\end{itemize}

Any of the above alternatives implies that $k\leq6$, that is, it does not
exist a compact connected oriented CMC hypersurface $M$ of $\mathbb{S}^{7}$
such that $\gamma(M)$ is contained in a closed $2^{7}$-orthant of
$\mathbb{S}^{6}$. Moreover, $\dim\operatorname*{Iso}(M)\geq k$ and the index
of any $k$-vector field with finite singularities on $M$ is zero.
\end{theorem}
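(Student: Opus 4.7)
The plan is to prove the cyclic chain (i) $\Rightarrow$ (ii) $\Rightarrow$ (iii) $\Rightarrow$ (ii), where the final return to (ii) trivially puts $\gamma(M)$ back into a (closed) $2^{k}$-orthant and closes the loop with (i). From this chain the three \emph{moreover} assertions will be read off directly.

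The key analytic input is that, since $M$ is CMC, formula (\ref{f}) collapses to
\[
\Delta\gamma=-\bigl(6+\|A\|^{2}\bigr)\gamma.
\]
Moreover, the orthogonality of $L_{x}$ (Proposition \ref{ortske}) gives
\[
\langle\gamma(x),v\rangle=\langle x^{-1}\cdot\eta(x),v\rangle=\langle\eta(x),x\cdot v\rangle=\langle\eta(x),V_{v}(x)\rangle
\]
for every $v\in T_{1}\mathbb{S}^{7}$, so each component function $\langle\gamma,v\rangle$ on $M$ satisfies $\Delta\langle\gamma,v\rangle=-\bigl(6+\|A\|^{2}\bigr)\langle\gamma,v\rangle$. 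For (i) $\Rightarrow$ (ii), choose unit normals $v_{1},\dots,v_{k}$ to the hyperplanes $P_{T_{j}}$ with signs arranged so that $\langle\gamma,v_{j}\rangle\geq 0$ on $M$ (possible precisely because $\gamma(M)$ lies in a single closed $2^{k}$-orthant). Integrating the eigenvalue equation over the compact $M$ yields
\[
\int_{M}\bigl(6+\|A\|^{2}\bigr)\langle\gamma,v_{j}\rangle\,dV=0,
\]
and since $6+\|A\|^{2}\geq 6>0$, the nonnegative integrand must vanish pointwise; hence $\langle\gamma,v_{j}\rangle\equiv 0$, i.e., $\gamma(M)\subset T_{j}$ for each $j$, which is (ii).

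For (ii) $\Rightarrow$ (iii), the identity above shows $\langle\gamma,v_{j}\rangle=\langle\eta,V_{v_{j}}\rangle=0$, so each $V_{v_{j}}$ is tangent to $M$. By Proposition \ref{kil} each $V_{v_{j}}$ is already a Killing field of $\mathbb{S}^{7}$, hence its restriction is a Killing field of $M$; and since Lie brackets of vector fields tangent to $M$ remain tangent, the whole Lie algebra $\mathcal{H}$ generated by $V_{v_{1}},\dots,V_{v_{k}}$ consists of ambient Killing fields tangent to $M$, so it sits inside the Killing algebra of $M$. The implication (iii) $\Rightarrow$ (ii) is immediate: if each $V_{v_{j}}$ is an isometry of $M$, it is in particular tangent to $M$, forcing $\langle\gamma,v_{j}\rangle=0$.

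For the consequences, the central observation is that $V_{v_{j}}(x)=x\cdot v_{j}$ are linearly independent in $T_{x}\mathbb{S}^{7}$ at \emph{every} point $x$, a direct consequence of $\mathbb{O}$ being a division algebra together with $x\neq 0$ and the independence of the $v_{j}$. This yields: (a) $(V_{v_{1}},\dots,V_{v_{k}})$ is a nowhere-vanishing $k$-vector field on $M$, so Theorem \ref{ind} forces the total index of every $k$-vector field on $M$ with finite singularities to vanish; (b) the $V_{v_{j}}$ are independent in the Killing algebra of $M$, giving $\dim\operatorname{Iso}(M)\geq k$; and (c) if $k=7$, then $\bigcap_{j}T_{j}=\emptyset$ in $\mathbb{S}^{6}$, contradicting $\gamma(M)\subset\bigcap_{j}T_{j}$ and the non-emptiness of $M$, so $k\leq 6$.

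The main obstacle will be the (i) $\Rightarrow$ (ii) step—specifically, making the coherent sign choice for the $v_{j}$ so that all $\langle\gamma,v_{j}\rangle\geq 0$ simultaneously on $M$, and reading "contained in a $2^{k}$-orthant" in the closed sense (otherwise the maximum-principle argument produces a strict contradiction rather than the desired conclusion $\gamma(M)\subset T_{j}$). A secondary subtlety is the exact application of Theorem \ref{ind} in (a): we need the existence of one nowhere-zero $k$-field to force index zero for \emph{every} $k$-field with isolated singularities, which is how the theorem is invoked throughout the paper.
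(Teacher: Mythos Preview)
Your proposal is correct and follows essentially the same route as the paper. The only variation is in (i) $\Rightarrow$ (ii): you integrate the eigenvalue identity over the compact $M$ to force the nonnegative integrand to vanish, whereas the paper argues that $\langle\gamma,v_i\rangle$ is superharmonic on the compact $M$, hence constant, hence zero---an equivalent device resting on the same formula (\ref{f}); your remarks on the open/closed reading of ``orthant'' and on the use of Theorem \ref{ind} also match the paper's implicit conventions.
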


\begin{proof}
We prove that (i) implies (ii). Since $\gamma(M)$ is contained in a closed
$2^{k}$ orthant of $\mathbb{S}^{6}$, there are $k$ linearly independents
vectors $v_{1},...,v_{k}$ such that $\left\langle \gamma,v_{i}\right\rangle
\geq0$, $1\leq i\leq k$. From (\ref{f}), with a fixed $i$, we obtain
\begin{equation}
\Delta\left(  \left\langle \gamma,v_{i}\right\rangle \right)  =\left\langle
\Delta\gamma,v_{i}\right\rangle =-\left(  6+\left\Vert A\right\Vert
^{2}\right)  \left\langle \gamma,v_{i}\right\rangle \leq0.\label{f2}%
\end{equation}
It follows that $\left\langle \gamma,v_{i}\right\rangle $ is superharmonic on
$M$. Since $M$ is compact, we have that $\left\langle \gamma,v_{i}%
\right\rangle $ is a constant. But then $\Delta\left\langle \gamma
,v_{i}\right\rangle =0$ and it follows from (\ref{f2}) that $\left\langle
\gamma,v_{i}\right\rangle =0$. Hence, given $x\in M$,
\[
0=\left\langle \gamma(x),v_{i}\right\rangle =\left\langle \Gamma_{x^{-1}%
}(\gamma(x)),\Gamma_{x^{-1}}(v_{i})\right\rangle =\left\langle \eta
(x),V_{v_{i}}(x)\right\rangle ,
\]
so that $V_{v_{i}}$ is a Hopf vector field of $\mathbb{S}^{7}$ tangent to
$M\ $, proving that $\gamma(M)\subset T_{i}$. Since this holds for all $1\leq
i\leq k$, it follows that (i) implies (ii).

\qquad As to the equivalence between (i) or (ii) and (iii), one only has to
note that if $V_{v_{1}},\dots,V_{v_{k}}$ are vector fields on $M$, then so is
the bracket of any two of them. Moreover, since the bracket of Killing fields
is another Killing field, the assertion related to $\mathcal{H}$ follows; in
particular $\dim\operatorname*{Iso}(M)\geq k$. Furthermore, since $V_{v_{1}%
},\dots,V_{v_{k}}$ are linearly independent, it follows from Theorem \ref{ind}
that the index of any $k$-vector field with finite singularities on $M$ is
zero. Finally, we have $k\leq6$, otherwise $\gamma(M)\subset\mathbb{S}%
^{6}\subset T_{1}\mathbb{S}^{7}\cong\mathbb{R}^{7}$, by (ii), would be
orthogonal to $7$ linearly independent vectors.
\end{proof}

Using the previous notation and definition of the index of a $k-$vector field
with finite singularities, note that when $k=1$, since $V_{l,1}$ has the
homotopy type of the sphere $\mathbb{S}^{l-1}$ and since $\pi_{l-1}%
(\mathbb{S}^{l-1})\cong\mathbb{Z}$, it follows that $I(X_{1})$ is an integer
which is well known to be the Euler characteristic of the manifold. Then, as a
consequence of Theorem \ref{mao}, we have:

\begin{corollary}
\label{eu}Let $M$ be a compact hypersurface of constant mean curvature of
$\mathbb{S}^{7}$. If the Gauss map $\gamma:M\rightarrow\mathbb{S}^{6}$ of $M$
is contained in a closed hemisphere of $\mathbb{S}^{6}$ then the Euler
characteristic of $M$ is zero.
\end{corollary}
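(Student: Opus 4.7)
The plan is to derive this as the $k=1$ specialization of Theorem \ref{mao}, combined with the classical Poincar\'e--Hopf identification of the index of a vector field with the Euler characteristic.

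First I would observe that a hemisphere of $\mathbb{S}^{6}$ is, in the language of the paper, precisely a $2^{1}$-orthant: it is a connected component of $\mathbb{S}^{6}\setminus T_{1}$ for the totally geodesic $5$-sphere $T_{1}$ bounding the hemisphere. Hence the hypothesis of Corollary \ref{eu} is exactly condition (i) of Theorem \ref{mao} with $k=1$. Applying that theorem yields the conclusion that the index of any $1$-vector field (that is, any ordinary vector field) on $M$ with finite singularities is zero.

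Next I would invoke the identification of this index with $\chi(M)$ pointed out in the paragraph immediately preceding the corollary: for $k=1$ the Stiefel manifold $V_{l,1}$ has the homotopy type of $\mathbb{S}^{l-1}$, so $\pi_{l-1}(V_{l,1})\cong\mathbb{Z}$ and the index $I_{(X_{1})}(p)$ at an isolated zero reduces to the usual Poincar\'e--Hopf local degree. By the Poincar\'e--Hopf theorem, the total index of a vector field with isolated zeros on a compact oriented manifold equals $\chi(M)$.

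To close the argument I just need to exhibit one such vector field. Since $M$ is compact (this is the hypothesis inherited from Theorem \ref{mao}; the corollary should be read with the same standing assumptions), I can take the gradient of a Morse function on $M$; its zero set is finite, so the hypotheses of the index statement are met. Combining the two previous paragraphs then gives $\chi(M)=I_{(X_{1})}=0$. There is no real obstacle: the entire analytic content (superharmonicity of $\langle\gamma,v\rangle$ via formula (\ref{f}), reduction to translational Killing fields tangent to $M$) has already been packaged into Theorem \ref{mao}, and what remains is the purely topological step of recognizing the $k=1$ index as an Euler characteristic.
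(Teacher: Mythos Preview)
Your proposal is correct and follows exactly the paper's approach: the corollary is stated immediately after the paragraph identifying the $k=1$ index with the Euler characteristic via $\pi_{l-1}(V_{l,1})\cong\mathbb{Z}$ and Poincar\'e--Hopf, and is presented simply ``as a consequence of Theorem \ref{mao}''. Your only additions are making the compactness hypothesis explicit and exhibiting a concrete vector field with finite singularities (via a Morse function), both of which merely fill in details the paper leaves implicit.
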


\section{\label{cp33}The Gauss map of constant mean curvature hypersurfaces of
$\mathbb{CP}^{3}$}

\qquad An\emph{ }action of $\mathbb{S}^{1}=\left\{  e^{i\theta}\right\}
_{\theta\in\mathbb{R}}$ on $\mathbb{S}^{7}$ is a \emph{Hopf action} if the
associated vector field%
\[
X(x):=\left.  \frac{d}{d\theta}e^{i\theta}\left(  x\right)  \right\vert
_{\theta=0},\text{ }x\in\mathbb{S}^{7},
\]
is a Hopf vector field. The complex projective space $\mathbb{CP}^{3}$ with
the Fubini-Study metric is the quotient $\mathbb{S}^{7}/\mathbb{S}^{1}$ of
$\mathbb{S}^{7}$ under a Hopf action of $\mathbb{S}^{1},$ and with the metric
such that the projection $\pi:\mathbb{S}^{7}\rightarrow\mathbb{CP}^{3}$ is a
Riemannian submersion$.$

Recall that $\mathbb{S}^{1}=\left\{  e^{i\theta}\right\}  $ acts on
$\mathbb{S}^{7}$ by the octonionic multiplication $e^{i\theta}(x):=e^{i\theta
}\cdot x$ (it is indeed an action since $\cdot$ is associative in each
subspace of octonions generated by two elements (\cite{Ba})).

\begin{lemma}
The octonionic action of $\mathbb{S}^{1}$ on $\mathbb{S}^{7}$ is a Hopf action.
\end{lemma}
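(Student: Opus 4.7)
The plan is to identify the infinitesimal generator of the $\mathbb{S}^{1}$-action with the left multiplication $L_{i}$, and then to show that $L_{i}$, viewed as an $8\times 8$ real matrix, is orthogonally conjugate to $X_{0}$.

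First, I compute $X$ explicitly. By Corollary \ref{ass}, the map $\theta\mapsto e^{i\theta}$ acts on $\mathbb{S}^{7}$ as a one-parameter group, i.e.\ $e^{i\theta}\cdot(e^{i\varphi}\cdot x)=e^{i(\theta+\varphi)}\cdot x$ (this uses the associativity statement of Lemma \ref{as}, with $a,b\in\mathbb{C}$). Differentiating at $\theta=0$ yields
\[
X(x)=\left.\frac{d}{d\theta}e^{i\theta}\cdot x\right|_{\theta=0}=i\cdot x = L_{i}(x),
\]
so $X$ is precisely the linear vector field on $\mathbb{R}^{8}$ given by left multiplication by $i\in\operatorname{Im}\mathcal{C}_{3}$.

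Next, I check the two structural properties of $L_{i}$ needed to recognize it as a Hopf field. Since $i\in\operatorname{Im}\mathcal{C}_{3}$, Proposition \ref{ortske} gives that $L_{i}$ is skew-symmetric. Moreover, again by Lemma \ref{as} applied with $a=b=i\in\mathbb{C}$,
\[
L_{i}^{2}(x)=i\cdot(i\cdot x)=(i\cdot i)\cdot x=-x,
\]
so $L_{i}^{2}=-I$. Thus $L_{i}$ is an orthogonal skew-symmetric endomorphism of $\mathbb{R}^{8}$ whose eigenvalues satisfy $\lambda^{2}=-1$; since complex eigenvalues of a real skew-symmetric matrix come in conjugate pairs and the total algebraic multiplicity is $8$, both $+i$ and $-i$ have multiplicity exactly $4$. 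This is precisely the spectrum of $X_{0}$.

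Finally, by the normal form theorem for real skew-symmetric matrices (equivalently, the spectral theorem for normal operators), any two real skew-symmetric $8\times 8$ matrices with the same spectrum are conjugate by an element of $O(8)$. Hence there exists $T\in O(8)$ with $L_{i}=TX_{0}T^{-1}$, so $X$ is a Hopf vector field and the $\mathbb{S}^{1}$-action by octonionic multiplication is a Hopf action. The only step requiring care is that the computation of $X(x)=i\cdot x$ and of $L_{i}^{2}=-I$ does rely on a nontrivial associativity property in the nonassociative algebra $\mathbb{O}$, but this is exactly what Lemma \ref{as} provides; if one prefers an explicit verification, one may instead read off the matrix $(e_{1})$ from the formula for $(x)$ at the end of Section \ref{CDf} and conjugate it into the form $X_{0}$ by a coordinate permutation and a sign change on one axis.
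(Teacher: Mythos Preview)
Your proof is correct. Both you and the paper identify the infinitesimal generator as $X(x)=i\cdot x=L_{i}(x)$ and then show this linear map is $O(8)$-conjugate to $X_{0}$; the only difference is in how the conjugacy is established. The paper takes the explicit route you allude to at the end: it reads off the matrix $(i)=(e_{1})$ from the product formula in Section~\ref{CDf} and exhibits a concrete orthogonal matrix $A$ (a coordinate permutation) with $(i)=AX_{0}A^{-1}$. Your primary argument instead observes that $L_{i}$ is skew-symmetric (Proposition~\ref{ortske}) with $L_{i}^{2}=-I$ (Lemma~\ref{as}), so its spectrum is $\{\pm i\}$ each with multiplicity $4$, and then invokes the orthogonal normal form for real skew-symmetric matrices. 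This is precisely the reasoning the paper itself uses in the proof of Proposition~\ref{kil} for the right translations $R_{v}$, so your approach is entirely in keeping with the paper's methods; it simply trades an explicit conjugating matrix for an appeal to the spectral theorem.
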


\begin{proof}
One may see that the vector field $W$ of $\mathbb{S}^{7}$ determined by the
octonionic action of $\mathbb{S}^{1}$ in $\mathbb{S}^{7}$ is
\[
W(p)=i\cdot p=\left(  i\right)  \left[  p\right]  ^{t}%
\]
where $\left(  i\right)  $ is the $8\times8$ and $\left[  p\right]  $ the
$1\times8$ matrix obtained writting $i$ and $p$ as linear combination of the
canonical basis of $\mathbb{R}^{8}$. We have that $\left(  i\right)  $ is a
Hopf vector field. Indeed,%

\[
\left(  i\right)  =AX_{0}A^{-1}%
\]
where $A$ is the orthogonal matrix%
\[
A=\left(
\begin{array}
[c]{cccc}%
B & 0 & 0 & 0\\
0 & B & 0 & 0\\
0 & 0 & B & 0\\
0 & 0 & 0 & B
\end{array}
\right)  ,\text{ }B=\left(
\begin{array}
[c]{cc}%
0 & 1\\
1 & 0
\end{array}
\right)
\]

\end{proof}

The \emph{Hopf symmetrization} of a vector field $X\in\Xi\left(
\mathbb{S}^{7}\right)  $ is a vector field $X^{h}\in\Xi\left(  \mathbb{S}%
^{7}\right)  $ defined by
\[
X^{h}(x)=\frac{1}{2\pi}\int_{0}^{2\pi}e^{-i\theta}\cdot X(e^{i\theta}\cdot
x)d\theta.
\]

We have

\begin{lemma}
\label{la}The Hopf symmetrization $X^{h}$ of a vector field $X$ of
$\mathbb{S}^{7}$ is invariant by the Hopf action, that is%
\[
X^{h}(e^{i\phi}\cdot x)=e^{i\phi}\cdot X^{h}(x)
\]
for all $x\in\mathbb{S}^{7}$ and for all $\phi.$
\end{lemma}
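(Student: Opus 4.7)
The plan is to prove the invariance directly from the definition by a change of variables in the integral, with the key non-trivial input being the associativity lemma (Lemma \ref{as}), which lets us move a complex scalar past a product of octonions.

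First I would substitute into the definition to get
\[
X^{h}(e^{i\phi}\cdot x)=\frac{1}{2\pi}\int_{0}^{2\pi}e^{-i\theta}\cdot X\bigl(e^{i\theta}\cdot(e^{i\phi}\cdot x)\bigr)\,d\theta.
\]
By Lemma \ref{as}, since $e^{i\theta}$ and $e^{i\phi}$ both lie in $\mathbb{C}\subset\mathbb{O}$, we have $e^{i\theta}\cdot(e^{i\phi}\cdot x)=(e^{i\theta}e^{i\phi})\cdot x=e^{i(\theta+\phi)}\cdot x$. Next I would perform the change of variable $\psi=\theta+\phi$ and use $2\pi$-periodicity of the integrand (as a function of $\psi$) to return the limits to $[0,2\pi]$, obtaining
\[
X^{h}(e^{i\phi}\cdot x)=\frac{1}{2\pi}\int_{0}^{2\pi}e^{-i(\psi-\phi)}\cdot X(e^{i\psi}\cdot x)\,d\psi.
\]

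The crucial step is now to pull the factor $e^{i\phi}$ outside the integral. Writing $e^{-i(\psi-\phi)}=e^{i\phi}\cdot e^{-i\psi}$ (valid as an identity in $\mathbb{C}$), and applying Lemma \ref{as} once more with $a=e^{i\phi}$, $b=e^{-i\psi}$, and the octonion $X(e^{i\psi}\cdot x)$ playing the role of $x$, we obtain
\[
e^{-i(\psi-\phi)}\cdot X(e^{i\psi}\cdot x)=e^{i\phi}\cdot\bigl(e^{-i\psi}\cdot X(e^{i\psi}\cdot x)\bigr).
\]
Since left multiplication $L_{e^{i\phi}}$ is a (bounded) $\mathbb{R}$-linear map on $\mathbb{O}$, it commutes with the integral, yielding $X^{h}(e^{i\phi}\cdot x)=e^{i\phi}\cdot X^{h}(x)$.

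The main obstacle here is genuinely the non-associativity of the octonions: a naive ``pull out'' of $e^{i\phi}$ is not allowed for general octonionic factors, so the argument must be funneled through Lemma \ref{as}, which guarantees associativity precisely when two of the three factors lie in the complex subalgebra $\mathbb{C}\subset\mathcal{C}_{3}$. Beyond that, the rest is just periodicity of the integrand together with linearity of $L_{e^{i\phi}}$, both of which are routine.
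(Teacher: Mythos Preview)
Your proof is correct and follows essentially the same route as the paper's own proof: substitute into the definition, use Lemma \ref{as} to collapse $e^{i\theta}\cdot(e^{i\phi}\cdot x)$ to $e^{i(\theta+\phi)}\cdot x$, change variables, apply Lemma \ref{as} again to factor out $e^{i\phi}$, and use periodicity to restore the limits. The only cosmetic difference is that you invoke periodicity before pulling out $e^{i\phi}$ while the paper does the opposite, and you are more explicit than the paper about where Lemma \ref{as} is being used.
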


\begin{proof}
We have%
\begin{align*}
X^{h}(e^{i\phi}\cdot x)  &  =\frac{1}{2\pi}\int_{0}^{2\pi}e^{-i\theta}\cdot
X(e^{i(\theta+\phi)}\cdot x)d\theta\\
&  =\frac{e^{i\phi}}{2\pi}\cdot\int_{0}^{2\pi}e^{-iu}\cdot X(e^{i(u)}\cdot
x)du=e^{i\phi}\cdot X^{h}(x),\text{ }x\in\mathbb{S}^{7},\text{ }\phi
\in\mathbb{R}.
\end{align*}

\end{proof}

\begin{lemma}
\label{xhx}$X\in\Xi\left(  \mathbb{S}^{7}\right)  $ is Hopf invariant if and
only if $X^{h}=X$.
\end{lemma}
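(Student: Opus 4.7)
The plan is to prove both implications directly, using only the definitions of $X^h$ and of Hopf invariance together with the associativity property of the $\mathbb{S}^1$-action on $\mathbb{S}^7$ established in Lemma \ref{as} and Corollary \ref{ass}.

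For the reverse implication, the argument is essentially free: if $X^h = X$, then Lemma \ref{la} says that $X^h$ (and hence $X$) is invariant under the Hopf action, i.e. $X(e^{i\phi}\cdot x) = e^{i\phi}\cdot X(x)$. So that direction needs no further work.

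For the forward implication, suppose $X$ is Hopf invariant, so $X(e^{i\theta}\cdot x) = e^{i\theta}\cdot X(x)$ for all $\theta\in\mathbb{R}$ and all $x\in\mathbb{S}^7$. I would substitute this into the definition of $X^h$ to obtain
\[
X^h(x) = \frac{1}{2\pi}\int_0^{2\pi} e^{-i\theta}\cdot\bigl(e^{i\theta}\cdot X(x)\bigr)\,d\theta.
\]
The key step is then to conclude that the integrand equals $X(x)$ for every $\theta$. This requires that $e^{-i\theta}\cdot(e^{i\theta}\cdot v) = (e^{-i\theta}\cdot e^{i\theta})\cdot v = 1\cdot v = v$ for any $v\in\mathbb{R}^8$, which is exactly the content of Lemma \ref{as} (associativity of octonionic multiplication when two of the factors lie in $\mathbb{C}\subset\mathbb{O}$), or equivalently the fact that $\mathbb{S}^1$ acts on $\mathcal{C}_n$ by Corollary \ref{ass}. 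Once this is in hand, the integral becomes $\frac{1}{2\pi}\int_0^{2\pi} X(x)\,d\theta = X(x)$, finishing the proof.

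The only conceptual obstacle is remembering that octonionic multiplication is non-associative in general, so one cannot casually collapse $e^{-i\theta}\cdot(e^{i\theta}\cdot v)$ to $v$ without invoking the already-established partial associativity. Everything else is a one-line substitution and a trivial averaging.
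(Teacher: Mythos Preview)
Your proposal is correct and matches the paper's own proof essentially line for line: both directions are handled exactly as you describe, invoking Lemma \ref{la} for one implication and substituting the invariance hypothesis into the integral, then using the partial associativity of Lemma \ref{as}/Corollary \ref{ass} to collapse $e^{-i\theta}\cdot(e^{i\theta}\cdot X(x))$ to $X(x)$, for the other. You have also correctly flagged the one subtlety (non-associativity of the octonions) and identified precisely where it is resolved.
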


\begin{proof}
If $X^{h}=X$ then $X$ is Hopf invariant by Lemma \ref{la}. Conversely if
$X\left(  e^{i\phi}\cdot x\right)  =e^{i\phi}\cdot X(x)$ then
\[
X^{h}(x)=\frac{1}{2\pi}\int_{0}^{2\pi}e^{-i\theta}\cdot X(e^{i\theta}\cdot
x)d\theta=X(x),\text{ }x\in\mathbb{S}^{7}.
\]
\bigskip
\end{proof}

Note that a vector field $W$ of $\mathbb{S}^{7}$ which is invariant by the
Hopf action defines a vector field $Z$ in $\mathbb{CP}^{3}$. Indeed, if
$y=e^{i\theta}\cdot x,$ $x\in\mathbb{S}^{7},$ then
\[
d\pi_{y}\left(  W(y)\right)  =d\pi_{e^{i\theta}\cdot x}\left(  W(e^{i\theta
}\cdot x)\right)  =d\pi_{e^{i\theta}\cdot x}\left(  e^{i\theta}\cdot
W(x)\right)  =d\pi_{x}\left(  W(x)\right)  .
\]
We may then define%
\[
Z\left(  z\right)  =d\pi_{x}\left(  W(x)\right)  ,\text{ }z\in\mathbb{CP}%
^{3},
\]
where $x\in\pi^{-1}(z)$. Given $v\in T_{1}\mathbb{S}^{7}$ set
\[
W_{v}(x)=\frac{x\cdot v-i\cdot((i\cdot x)\cdot v)}{2},\text{ }x\in
\mathbb{S}^{7}.
\]
We claim that $W_{v}$ is a vector field of $\mathbb{S}^{7}.$ Indeed, given
$x\in\mathbb{S}^{7}$ the map $F_{x}:\mathbb{S}^{7}\rightarrow\mathbb{S}^{7}$
given by
\[
F_{x}(u)=-i\cdot\left(  \left(  i\cdot\left(  x\cdot u\right)  \right)  \cdot
u\right)  ,\text{ }u\in\mathbb{S}^{7},
\]
satisfies
\[
F_{x}(1)=x
\]
and
\[
d\left(  F_{x}\right)  _{1}(h)=x\cdot h-i\cdot((i\cdot x)\cdot h).
\]
Hence
\[
W_{v}(x)=d\left(  F_{x}\right)  _{1}\left(  \frac{v}{2}\right)  \in
T_{x}\mathbb{S}^{7}%
\]
for all $x\in\mathbb{S}^{7}.$ We also have $W_{v}(1)=v.$

\begin{lemma}
$W_{v}$ is a Hopf invariant vector field for any $v\in T_{1}\mathbb{S}^{7}.$
\end{lemma}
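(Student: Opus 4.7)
The plan is to verify Hopf invariance directly: show that $W_v(e^{i\phi}\cdot x)=e^{i\phi}\cdot W_v(x)$ for every $\phi\in\mathbb{R}$ and every $x\in\mathbb{S}^7$. By Lemma \ref{xhx} this is equivalent to $W_v^h=W_v$. The only structural tool available is Lemma \ref{as}: the octonionic product associates whenever at least two of the three factors lie in $\mathbb{C}\subset\mathbb{O}$. In particular, $i\cdot(i\cdot u)=-u$ and $i\cdot(e^{i\phi}\cdot u)=(i\,e^{i\phi})\cdot u=e^{i\phi}\cdot(i\cdot u)$ for every $u\in\mathbb{O}$, while re-associations involving both $x$ and $v$ are forbidden.

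My first step is to apply the second of those identities to the inner factor of the second summand of $W_v(e^{i\phi}\cdot x)$, so that both summands of $2W_v(e^{i\phi}\cdot x)$ have $e^{i\phi}$ adjacent to $x$ or to $i\cdot x$. I would then expand $e^{i\phi}=\cos\phi+i\sin\phi$ throughout and collect terms using the $\mathbb{R}$-bilinearity of the octonionic product. The cosine pieces agree on the two sides immediately. The sine pieces produce, after using $i\cdot i=-1$ and $e^{i\phi}\cdot(i\cdot w)=(e^{i\phi}\cdot i)\cdot w$ (both by Lemma \ref{as}), the same associator
$$[i,x,v]:=(i\cdot x)\cdot v-i\cdot(x\cdot v)$$
in the discrepancy between the first summands of the two sides and in the discrepancy between the second summands; substituting $y=i\cdot x$ and reducing $i\cdot y=-x$ is what makes the second discrepancy collapse to the same expression. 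Subtracting then yields $2W_v(e^{i\phi}\cdot x)-2e^{i\phi}\cdot W_v(x)=0$.

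The main obstacle is purely bookkeeping under non-associativity: the identity $W_v(e^{i\phi}\cdot x)=e^{i\phi}\cdot W_v(x)$ does not hold term by term, and the role of the auxiliary summand $-i\cdot((i\cdot x)\cdot v)$ in the definition of $W_v$ is precisely to manufacture a compensating associator that cancels the failure of the naive right translation $x\mapsto x\cdot v$ to commute with the Hopf action. At each re-association one must verify that at most one of the three participating factors lies outside $\mathbb{C}$, so that Lemma \ref{as} legitimately applies; once that is respected, the whole proof reduces to the equality of the two $\sin\phi\,[i,x,v]$ contributions isolated above.
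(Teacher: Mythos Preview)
Your argument is correct. Expanding $e^{i\phi}=\cos\phi+i\sin\phi$ and using only the re-associations $a\cdot(b\cdot u)=(ab)\cdot u$ with $a,b\in\mathbb{C}$ (which is exactly what Lemma~\ref{as} supplies), the first summand of $2W_v$ fails to be invariant by $\sin\phi\,[i,x,v]$ and the second summand by $-\sin\phi\,[i,x,v]$, so the sum is invariant. Every re-association you invoke has its two leftmost factors in $\mathbb{C}$, so Lemma~\ref{as} applies each time.

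Your route is genuinely different from the paper's. The paper proves $W_v^h=W_v$ by computing the symmetrization integral: a shift $\theta\mapsto\theta+\pi/2$ together with periodicity first collapses the two-term integrand to the single term $e^{-i\theta}\cdot((e^{i\theta}\cdot x)\cdot v)$, and then an expansion in $\cos\theta,\sin\theta$ and explicit integration recover $W_v(x)$. In effect the paper shows that $W_v$ \emph{is} the Hopf symmetrization of the naive right translation $x\mapsto x\cdot v$, which explains where the formula for $W_v$ comes from. Your argument bypasses the integral entirely and instead makes transparent \emph{why} the particular two-term combination in $W_v$ works: the second summand is engineered precisely so that its associator defect cancels that of the first. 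Both approaches rest on the same algebraic input (Lemma~\ref{as}), but yours is shorter and more conceptual about the cancellation mechanism, while the paper's reveals $W_v$ as an average.
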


\begin{proof}
Let $v\in T_{1}\mathbb{S}^{7}.$ We prove that $W_{v}^{h}=W_{v}.$ The lemma
then follows from Lemma \ref{xhx}.
\begin{align*}
W_{v}^{h}(x)  &  =\frac{1}{2\pi}\int_{0}^{2\pi}e^{-i\theta}\cdot
W_{v}(e^{i\theta}\cdot x)d\theta\\
&  =\frac{1}{2\pi}\int_{0}^{2\pi}e^{-i\theta}\cdot\left(  \frac{\left(
e^{i\theta}\cdot x\right)  \cdot v-i\cdot((i\cdot\left(  e^{i\theta}\cdot
x\right)  )\cdot v)}{2}\right)  d\theta\\
&  =\frac{1}{4\pi}\int_{0}^{2\pi}e^{-i\theta}\cdot\left(  \left(  e^{i\theta
}\cdot x\right)  \cdot v\right)  d\theta+\frac{1}{4\pi}\int_{0}^{2\pi
}e^{-i(w)}\cdot\left(  \left(  e^{i(w)}\cdot x\right)  \cdot v\right)  dw\\
&  =\frac{1}{2\pi}\int_{0}^{2\pi}e^{-i\theta}\cdot\left(  \left(  e^{i\theta
}\cdot x\right)  \cdot v\right)  d\theta.
\end{align*}
Writing $e^{i\theta}=\cos\theta+i\sin\theta$ we have
\begin{align*}
W_{v}^{h}(x)  &  =\left(  x\cdot v\right)  \frac{1}{2\pi}\int_{0}^{2\pi}%
\cos^{2}\theta d\theta+\left(  \left(  i\cdot x\right)  \cdot v\right)
\frac{1}{2\pi}\int_{0}^{2\pi}\cos\theta\sin\theta d\theta\\
&  -\left(  i\cdot\left(  x\cdot v\right)  \right)  \frac{1}{2\pi}\int
_{0}^{2\pi}\sin\theta\cos\theta d\theta-i\cdot\left(  \left(  i\cdot x\right)
\cdot v\right)  \frac{1}{2\pi}\int_{0}^{2\pi}\sin^{2}\theta d\theta\\
&  =\frac{\left(  x\cdot v\right)  -i\cdot\left(  \left(  i\cdot x\right)
\cdot v\right)  }{2}=W_{v}(x)
\end{align*}
concluding with the proof of the lemma.
\end{proof}

Note that since the symmetrization is an average process it may happen to
$X^{h}$ be identically zero for a nonzero vector field $X$. But this is not
the case with $W_{v}$ if $v\neq0$ since $W_{v}^{h}(1)=v$. And indeed, defining
the following vector fields of $\mathbb{CP}^{3}$: $Z_{n}(z)=d\pi_{x}\left(
W_{e_{n+1}}\left(  x\right)  \right)  ,$ $1\leq n\leq6$, where $x\in\pi
^{-1}(z)$ and $\left\{  e_{n}\right\}  _{1\leq n\leq7}$ is the canonical
octonionic basis (see \cite{Ba}), we have the stronger fact:

\begin{proposition}
Let $\mathbb{CP}^{2}\subset\mathbb{CP}^{3}$ be\ the totally geodesic real
codimension $2$ complex projective space which is the cut locus of $\pi
(1)\in\mathbb{CP}^{3}.$ Then $Z_{1},~...,~Z_{6}\ $are Killing fields of
$\mathbb{CP}^{3}$ which are linearly independent at any point of
$\mathbb{CP}^{3}\backslash\mathbb{CP}^{2}.$
\end{proposition}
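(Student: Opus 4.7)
The strategy has two parts. First, for the Killing property: each $W_{e_{n+1}}$ on $\mathbb{S}^7$ is the Hopf symmetrization of the right-translation $R_{e_{n+1}}$, which is a Killing field by Proposition \ref{kil}. Because the Hopf action is by isometries (its generator is itself Killing), averaging a Killing field against this action yields a Killing field. By Lemma \ref{xhx}, $W_{e_{n+1}}$ is Hopf-invariant, so it is $\pi$-related to a well-defined vector field $Z_n$ on $\mathbb{CP}^3$; the projection along a Riemannian submersion of a Hopf-invariant Killing field is again Killing, so each $Z_n$ is a Killing field of $\mathbb{CP}^3$.

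For linear independence, I first check at $\pi(1)$. From $i\cdot(i\cdot v) = (i\cdot i)\cdot v = -v$ (Lemma \ref{as}), we obtain $W_v(1) = \frac{1}{2}(v - i\cdot(i\cdot v)) = v$, so $W_{e_{n+1}}(1) = e_{n+1}$ for $n = 1,\dots,6$. The vertical subspace at $1$ is $\mathbb{R}\cdot(i\cdot 1) = \mathbb{R} e_1$, so $\{e_2,\dots,e_7\}$ is a basis of the horizontal subspace of $T_1\mathbb{S}^7$. Therefore $d\pi_1$ sends the six vectors $W_{e_{n+1}}(1)$ to a basis of $T_{\pi(1)}\mathbb{CP}^3$, and $Z_1,\dots,Z_6$ are linearly independent at $\pi(1)$.

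To extend the independence to an arbitrary $\pi(x)\in \mathbb{CP}^3\setminus\mathbb{CP}^2$, I use the Hopf-invariance of each $W_v$ to replace $x$ by $e^{i\theta}\cdot x$ and reduce to the normalized form $x = x_0\cdot 1 + y$ with $x_0 > 0$ and $y \in V' := \mathrm{span}(e_2,\dots,e_7)$; the hypothesis $\pi(x)\notin\mathbb{CP}^2$ is precisely that such a representative satisfies $x_0>0$. The $\mathbb{R}$-linearity of $W_v$ in $x$ then gives
\begin{equation*}
W_v(x) = x_0\,v + W_v(y),
\end{equation*}
so the problem reduces to proving that the real-linear map $V' \ni v \mapsto d\pi_x(x_0 v + W_v(y)) \in T_{\pi(x)}\mathbb{CP}^3$ (source and target both $6$-dimensional) has trivial kernel. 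The main obstacle is controlling the correction term $W_v(y)$, which may contribute both horizontal and vertical components and which, because of non-associativity, does not simplify by direct application of Lemma \ref{as}. I would handle this via the structural identity $T_v := W_v = \frac{1}{2}(R_v - L_i R_v L_i)$, now viewed as an $\mathbb{R}$-linear endomorphism of all of $\mathbb{O}$: since $L_i^2 = -\mathrm{Id}$, this $T_v$ is the $\mathbb{C}$-linear part of $R_v$ with respect to the complex structure $J := L_i$, and hence $T_v\colon\mathbb{O}\to\mathbb{O}$ is $\mathbb{C}$-linear when $\mathbb{O}$ is regarded as $\mathbb{C}^4$ via left multiplication by $\mathbb{C}$. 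The vertical-image condition $T_v(x) \in \mathbb{R}\cdot(i\cdot x)$ then becomes a $\mathbb{C}$-linear-algebraic condition on $v$, which I would verify by explicit computation using the octonion multiplication table (equivalently, the $8\times 8$ matrix $(x)$ introduced in Section \ref{CDf}) applied to the splitting $\mathbb{O} = \mathbb{C}\cdot 1 \oplus V'$.
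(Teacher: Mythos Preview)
Your treatment of the Killing property is correct and essentially the same as the paper's: the $W_{e_{n}}$ are Killing on $\mathbb{S}^{7}$ (being built from left and right translations), Hopf--invariance pushes them down through the Riemannian submersion $\pi$, and the projections $Z_{n}$ are Killing on $\mathbb{CP}^{3}$.

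For linear independence, however, your argument stops exactly where the real work begins. You correctly check independence at $\pi(1)$, correctly normalize a general representative to $x=x_{0}\cdot 1+y$ with $x_{0}>0$ and $y\in\operatorname{span}(e_{2},\dots,e_{7})$, and correctly identify $W_{v}=\tfrac{1}{2}(R_{v}-L_{i}R_{v}L_{i})$ as the $\mathbb{C}$--linear part of $R_{v}$ for the complex structure $J=L_{i}$. This last observation is genuinely useful and the paper does not make it. But the decisive step --- showing that $x_{0}v+W_{v}(y)\in\mathbb{R}\cdot(i\cdot x)$ forces $v=0$ --- you leave as ``I would verify by explicit computation''. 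That is precisely the content of the proposition, and nothing in your setup rules out a degeneration as $x_{0}\to 0$; the whole point is to see \emph{how} $x_{0}$ enters.

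The paper does carry out this computation, by a different organization. It writes each $W_{e_{n}}(x)$, $2\le n\le 7$, explicitly in coordinates, checks directly that $\langle W_{e_{1}}(x),W_{e_{n}}(x)\rangle=0$ (so the $W_{e_{n}}$ are horizontal for $n\ge 2$, a fact you do not establish), and then computes the $6\times 6$ Gram determinant
\[
\det\bigl(\langle W_{e_{n}}(x),W_{e_{m}}(x)\rangle_{2\le n,m\le 7}\bigr)=(a_{0}^{2}+a_{1}^{2})^{4}.
\]
This is nonzero exactly off the $\mathbb{S}^{5}=\{a_{0}=a_{1}=0\}$ that projects to $\mathbb{CP}^{2}$, and since horizontality has been checked, independence of the $Z_{n}$ follows. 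Your $\mathbb{C}$--linearity remark would let you recast this as a computation with $4\times 4$ complex matrices rather than $8\times 8$ real ones, which is a genuine simplification; but you must actually perform it (or an equivalent one) to close the argument.
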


\begin{proof}
We first note $Z_{n}$ are Killing fields of $\mathbb{CP}^{3}$. Indeed, the
vector fields $W_{e_{n}}$ are Killing fields on $\mathbb{S}^{7}$ since they
are given as a sum of compositions of right and left octonionic translations.
Since $W_{e_{n}}$ commutes with the Hopf action it belongs to the Lie algebra
of the unitary group $U(4),$ which is the isometry group of $\mathbb{CP}^{3}$
and hence projects into a Killing field of $\mathbb{CP}^{3}$.

For proving that the $Z_{n}$ are linearly independent it is enough to show
that $W_{e_{n}}$ are orthogonal to the fibers of $\pi$ for $2\leq n\leq7$ and,
since then%
\[
\left(  \left\langle Z_{n-1}(\pi(x)),Z_{m-1}(\pi(x))\right\rangle _{2\leq
n,m\leq7}\right)  =\left(  \left\langle W_{e_{n}}(x),W_{e_{m}}(x)\right\rangle
_{2\leq n,m\leq7}\right)  ,
\]
that
\[
\det\left(  \left\langle W_{e_{n}}(x),W_{e_{m}}(x)\right\rangle _{2\leq
n,m\leq7}\right)  \neq0
\]
for all $x\in\mathbb{S}^{7}\backslash\mathbb{S}^{5}$ where $\mathbb{S}^{5}$ is
the totally geodesic codimension $2$ sphere
\[
\mathbb{S}^{5}=\left\{  \left(  0,0,a_{2},...,a_{8}\right)  \text{
$\vert$
}a_{2}^{2}+...+a_{8}^{2}=1\right\}  .
\]

Setting%
\[
x=\sum_{n=0}^{7}a_{n}e_{n}%
\]
we have%
\begin{align*}
W_{e_{1}}(x)  &  =i\cdot x=X_{0}(x),\\
W_{e_{n}}(x)  &  =\frac{x\cdot e_{n}-i\cdot((i\cdot x)\cdot e_{n})}{2}%
\end{align*}

We may then see that%
\[
\left\langle X_{0}(x),W_{e_{n}}(x)\right\rangle =\left\langle W_{e_{1}%
}(x),W_{e_{n}}(x)\right\rangle =0,\text{ }2\leq n\leq7.
\]
Moreover, a calculation shows that
\begin{align*}
&  \det\left(  \,\left\langle W_{e_{n}}(x),W_{e_{m}}(x)\right\rangle _{2\leq
n,m\leq7}\right) \\
&  =\left(  a_{0}^{2}+a_{1}^{2}\right)  ^{4}\left(  a_{0}^{2}+a_{1}^{2}%
+a_{2}^{2}+a_{3}^{2}+a_{4}^{2}+a_{5}^{2}+a_{6}^{2}+a_{7}^{2}\right)  ^{2}\\
&  =\left(  a_{0}^{2}+a_{1}^{2}\right)  ^{4},
\end{align*}
concluding the proof of the proposition.
\end{proof}

Define a translation $\Gamma:T\left(  \mathbb{CP}^{3}\backslash\mathbb{CP}%
^{2}\right)  \rightarrow\mathbb{R}^{6}$ by setting%
\[
\Gamma_{x}\left(  v\right)  =\left(  \left\langle v,Z_{1}(x)\right\rangle
,...,\left\langle v,Z_{6}(x)\right\rangle _{6}\right)  ,\text{ }%
x\in\mathbb{CP}^{3}\backslash\mathbb{CP}^{2},\text{ }v\in T_{x}\mathbb{CP}%
^{3}.
\]
Any $v\in\mathbb{R}^{6}\backslash\{0\}$ determines a Killing field $Z_{v}$ on
$\mathbb{CP}^{3}\backslash\mathbb{CP}^{2},$ without singularities, by setting
$Z_{v}(x)=\Gamma_{x}^{-1}(v).$ We have%
\[
Z_{v}(x)=v_{1}Z_{1}(x)+...+v_{6}Z_{6}(x)
\]
if $v=\left(  v_{1},...,v_{6}\right)  .$ If $M$ is an orientable hypersurface
of $\mathbb{CP}^{3}\backslash\mathbb{CP}^{2}$ and $\eta$ a unit normal vector
field along $M$ the Gauss map $\gamma:M\rightarrow\mathbb{R}^{6}$ of $M$ is
defined by%
\[
\gamma(x)=\Gamma_{x}\left(  \eta(x)\right)  ,\text{ }x\in M.
\]
Using a similar proof to that of Theorem \ref{for}, we obtain:

\begin{theorem}
Let $M$ be an orientable hypersurface of $\mathbb{CP}^{3}\backslash
\mathbb{CP}^{2}$, $\eta$ a unit normal vector field along $M$ and
$\gamma:M\rightarrow\mathbb{R}^{6}$ the associated Gauss map. Then%
\[
\Delta\gamma(x)=5\Gamma_{x}\left(  \operatorname{grad}H(x)\right)  +\left(
8+\left\Vert A(x)\right\Vert ^{2}\right)  \gamma\left(  x\right)  ,\text{
}x\in M,
\]
where $H$ and $A$ are the mean curvature function and the second fundamental
form of $M.$ In particular, $M$ has constant mean curvature if and only if
$\gamma$ satisfies the vectorial PDE%
\[
\Delta\gamma=\left(  8+\left\Vert A(x)\right\Vert ^{2}\right)  \gamma.
\]

\end{theorem}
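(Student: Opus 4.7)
The plan is to mirror the proof of Theorem~\ref{for} coordinate-by-coordinate, exploiting the fact (established in the preceding proposition) that each of $Z_1,\dots,Z_6$ is a Killing field on $\mathbb{CP}^{3}$. Unfolding the definition of the Gauss map, the $j$-th coordinate of $\gamma$ is the scalar function $\gamma_j(x)=\langle\eta(x),Z_j(x)\rangle$, so that $\Delta\gamma$ is just the vector of componentwise Laplacians $\bigl(\Delta\langle\eta,Z_j\rangle\bigr)_{j=1}^{6}$. Each of these six scalar Laplacians can be evaluated by the same Killing-field identity (Proposition~1 of \cite{FR}) that was the workhorse of Theorem~\ref{for}.

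Concretely, I would apply that identity with $V=Z_j$ and $\bar N=\mathbb{CP}^{3}$. The identity expresses $\Delta\langle\eta,V\rangle$ in terms of $\dim M$, $\|A\|^{2}$, $\mathrm{Ric}_{\bar N}(\eta,\eta)$, $\langle V,\operatorname{grad}H\rangle$ and $\langle V,\eta\rangle$. Here the three ambient inputs are: (i) $\dim M=5$, which produces the coefficient $5$ in front of $\operatorname{grad}H$; (ii) the Fubini--Study metric on $\mathbb{CP}^{3}$ (inherited via the Riemannian submersion $\pi\colon\mathbb{S}^{7}\to\mathbb{CP}^{3}$) is Einstein with Ricci constant $2(m+1)=8$ for $m=3$, producing the constant $8$; (iii) the Killing property of each $Z_j$ on $\mathbb{CP}^{3}$, already established. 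I would then assemble the six resulting scalar identities into a single $\mathbb{R}^{6}$-valued identity and recognize, directly from the definition of $\Gamma_x$, that
\[
\bigl(\langle Z_j(x),\operatorname{grad}H(x)\rangle\bigr)_{j=1}^{6}=\Gamma_x(\operatorname{grad}H(x)),\qquad \bigl(\langle Z_j(x),\eta(x)\rangle\bigr)_{j=1}^{6}=\gamma(x),
\]
which yields the claimed formula for $\Delta\gamma$.

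The CMC equivalence at the end of the statement is then immediate: by the preceding proposition, $Z_1(x),\dots,Z_6(x)$ span $T_x(\mathbb{CP}^{3}\setminus\mathbb{CP}^{2})$ at every point, so $\Gamma_x$ is a linear isomorphism; hence $\Gamma_x(\operatorname{grad}H)=0$ if and only if $\operatorname{grad}H=0$, which reduces the formula to the displayed vectorial PDE.

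The main obstacle is not conceptual but is concentrated in verifying the two constants $5$ and $8$ with the correct normalizations. The $5$ is automatic from $\dim M$, but the Einstein constant of the Fubini--Study metric must be computed in the normalization induced by the Hopf submersion $\pi\colon\mathbb{S}^{7}\to\mathbb{CP}^{3}$; this is most cleanly done through O'Neill's horizontal-vertical formulas, using that $\pi$ has totally geodesic $\mathbb{S}^{1}$-fibers and that the base therefore has holomorphic sectional curvature $4$, giving $\mathrm{Ric}=8\,g$. The sign conventions of Proposition~1 of \cite{FR} must be tracked carefully, in the same way as in the proof of Theorem~\ref{for}, to make sure the coefficients appear exactly as stated.
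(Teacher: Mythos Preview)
Your proposal is correct and follows exactly the approach the paper takes: the paper itself offers no detailed argument here, stating only ``Using a similar proof to that of Theorem~\ref{for}'', and your outline is precisely that, applying Proposition~1 of \cite{FR} componentwise with the Killing fields $Z_j$ in place of the $V_{v_j}$. Your identification of the constants---$5=\dim M$ and $8$ as the Einstein constant of the Fubini--Study metric in the Hopf normalization---and your use of the linear independence of the $Z_j$ on $\mathbb{CP}^3\setminus\mathbb{CP}^2$ to deduce the CMC equivalence supply the only details the paper leaves implicit.
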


If one replaces a hemisphere by a half-space of $\mathbb{R}^{6},$ a quadrant
as one connected component of $\mathbb{R}^{6}\setminus\left(  P_{1}\cup
P_{2}\right)  $ where $P_{i}$ are linearly independent hyperplanes through the
origin of $\mathbb{R}^{6}$ (that is, the unit normal vector to $P_{i}$ are
linearly independent), an octant as a connected component of $\mathbb{R}%
^{6}\setminus\left(  P_{1}\cup P_{2}\cup P_{3}\right)  $ and so on, we have an
extension of Theorem \ref{mao} to $\mathbb{CP}^{3}$. Its statement is
completely similar to Theorem \ref{mao} and therefore omitted. We think it is
worthwhile however to state a corollary on the image of the Gauss map:

\begin{corollary}
Let $M$ be a compact oriented immersed hypersurface of $\mathbb{CP}^{3}$ and
let $\gamma:M\rightarrow\mathbb{R}^{6}$ be the Gauss map of $M$. Then
$\gamma(M)$ is not contained in a half space of $\mathbb{R}^{6}.$
\end{corollary}

\bigskip

\end{document}